\documentclass{amsart}

\usepackage{color}
\usepackage{amsmath, amsthm, amssymb}
\usepackage{amsfonts}
\usepackage[ansinew]{inputenc}
\usepackage[dvips]{epsfig}
\usepackage{graphicx}
\usepackage[english]{babel}
\usepackage{hyperref}
\theoremstyle{plain}
\newtheorem{thm}{Theorem}
\newtheorem{cor}[thm]{Corollary}
\newtheorem{lem}[thm]{Lemma}
\newtheorem{prop}[thm]{Proposition}
\newtheorem{rem}[thm]{Remark}

\title[On a factorization of Riemann's $\zeta$ function and its computation]{On a factorization of Riemann's $\zeta$ function with respect to a quadratic field and its computation}

\author{Xavier Ros-Oton}

\address{Universitat Polit\`ecnica de Catalunya, Departament de Matem\`{a}tica
Aplicada I, Diagonal 647, 08028 Barcelona, Spain}
\email{xavier.ros.oton@upc.edu}

\keywords{Riemman's $\zeta$ function, factorization, functional equation, quadratic field}


\date{}
\begin{document}

\maketitle

%

\begin{abstract} Let $K$ be a quadratic field, and let $\zeta_K$ its Dedekind zeta function. In this paper we introduce a factorization of $\zeta_K$ into two functions, $L_1$ and $L_2$, defined as partial Euler products of $\zeta_K$, which lead to a factorization of Riemann's $\zeta$ function into two functions, $p_1$ and $p_2$. We prove that these functions satisfy a functional equation which has a unique solution, and we give series of very fast convergence to them. Moreover, when $\Delta_K>0$ the general term of these series at even positive integers is calculated explicitly in terms of generalized Bernoulli numbers.
\end{abstract}

\vspace{3mm}
\section{Introduction}
\label{intro}

Let $K$ be a quadratic field and let $\chi$ be the Dirichlet
character attached to $K/\mathbb{Q}$. Its Dedekind's zeta
function can be written as
\[\zeta_K(s)=\zeta(s)L(s,\chi),\] where $\zeta$ is Riemann's zeta
function and $L$ is the $L$-function associated with $\chi$ (see, for example, \cite{2}).
Hence, an
alternative factorization, for $\mathfrak{Re}(s)>1$, is the one given by the
partial products
\[\zeta_K(s)=\prod_{p|d}(1-p^{-s})^{-1}L_1(s)L_2(s),\] where $d=|\Delta_K|$ is the absolute value of the discriminant of $K$, and
\[L_1(s)=\prod_{\chi(p)=1}(1-p^{-s})^{-2},\qquad
L_2(s)=\prod_{\chi(p)=-1}(1-p^{-2s})^{-1}.\] Note that $L_1$ and
$L_2$ are obtained as partial Euler products of $\zeta(s)^2$ and
$\zeta(2s)$ respectively, so they converge and are non-zero for
$\mathfrak{Re}(s)>1$ and $\mathfrak{Re}(s)>1/2$ respectively.

Define now
\begin{equation}\label{p}p_1(s)=\prod_{\chi(p)=1}(1-p^{-s})^{-1}\qquad
\textrm{and}\qquad p_2(s)=\prod_{\chi(p)=-1}(1-p^{-s})^{-1}.\end{equation}
Then, we have that
\[L_1(s)=p_1(s)^2,\qquad L_2(s)=p_2(2s),\]
and thus it is equivalent
to study $L_1$ and $L_2$ or $p_1$ and $p_2$.
Note that  \[\zeta(s)=\prod_{p|d}(1-p^{-s})^{-1}p_1(s)p_2(s),\] and hence, $p_1$ and $p_2$ give a factorization of Riemann's zeta function.

The plan of the paper is as follows. In section 2 we see that $p_1$ and $p_2$ satisfy a functional equation. More precisely, we prove

\begin{thm}\label{th} The functions $p_1$ and  $p_2$ satisfy the functional equations
\begin{equation}\label{functeq}\frac{p_i(2s)}{p_i(s)^2}=q_i(s),
\qquad \lim_{\mathfrak{Re}(s)\rightarrow +\infty}p_i(s)=1,\qquad \mbox{for}\qquad i=1,2,\end{equation} where \begin{equation}\label{qs}q_1(s)=\frac{\zeta(2s)}{\zeta(s)L(s,\chi)}\prod_{p|d}(1+p^{-s}),\qquad q_2(s)=\frac{L(s,\chi)}{\zeta(s)}\prod_{p|d}(1-p^{-s})^{-1}.\end{equation}

Furthermore, these functional equations have a unique solution, so they completely determine the functions $p_1$ and $p_2$.
\end{thm}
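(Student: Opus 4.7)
My plan is in two parts: first, verify the functional equations by direct Euler-product manipulation; second, prove uniqueness by iterating the equation after taking logarithms.

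For the first part, I would compute both sides as convergent Euler products on $\mathfrak{Re}(s)>1$ and match them factor by factor. Using $1-p^{-2s}=(1-p^{-s})(1+p^{-s})$, the left-hand side collapses to
\[
\frac{p_i(2s)}{p_i(s)^2}=\prod_{\chi(p)=\varepsilon}\frac{1-p^{-s}}{1+p^{-s}},
\]
with $\varepsilon=+1$ for $i=1$ and $\varepsilon=-1$ for $i=2$. For the right-hand side, I would expand $\zeta(s)$, $\zeta(2s)$ and $L(s,\chi)$ as full Euler products, yielding
\[
\frac{\zeta(2s)}{\zeta(s)L(s,\chi)}=\prod_{p}\frac{1-\chi(p)p^{-s}}{1+p^{-s}},\qquad
\frac{L(s,\chi)}{\zeta(s)}=\prod_{p}\frac{1-p^{-s}}{1-\chi(p)p^{-s}}.
\]
Splitting each product into the three prime classes $p\mid d$ (so $\chi(p)=0$), $\chi(p)=1$, and $\chi(p)=-1$, the finite correction factors $\prod_{p\mid d}(1+p^{-s})$ and $\prod_{p\mid d}(1-p^{-s})^{-1}$ are designed precisely to cancel the ramified contributions, while one of the two remaining classes contributes trivially in each case. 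The surviving product matches the left-hand side exactly, giving the functional equation. The condition $p_i(s)\to 1$ as $\mathfrak{Re}(s)\to\infty$ is immediate from the absolute convergence of the defining Euler product.

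For uniqueness, let $f$ be any nonvanishing holomorphic solution in a right half-plane with $f(s)\to 1$ as $\mathfrak{Re}(s)\to\infty$. Setting $g(s):=\log f(s)$ and $h(s):=\log q_i(s)$ with the branches normalized by $g,h\to 0$ at infinity (both $f$ and $q_i$ tend to $1$), the functional equation becomes the additive relation $g(s)=\tfrac12 g(2s)-\tfrac12 h(s)$. Iterating this $n$ times yields
\[
g(s)=\frac{g(2^n s)}{2^n}-\sum_{k=0}^{n-1}\frac{h(2^k s)}{2^{k+1}}.
\]
As $\mathfrak{Re}(2^n s)\to\infty$ the boundary term vanishes, and since $q_i(\sigma)-1$ decays like $p_0^{-\sigma}$ (with $p_0$ the smallest prime in the relevant class) the series converges geometrically, giving
\[
\log p_i(s)=-\sum_{k=0}^{\infty}\frac{\log q_i(2^k s)}{2^{k+1}}.
\]
The right-hand side depends only on $q_i$, so any two solutions coincide.

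The main obstacle will be the Euler-product bookkeeping, specifically verifying that the ramified primes are exactly compensated by the prescribed finite factors $\prod_{p\mid d}(1\pm p^{-s})^{\mp 1}$; this is where the specific form of $q_i$ in \eqref{qs} becomes essential. The uniqueness step is a standard telescoping once one checks that every candidate solution is holomorphic and nonvanishing in a common right half-plane so that the chosen branch of $\log$ is well defined throughout the iteration.
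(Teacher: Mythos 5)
Your argument is correct, and its uniqueness half is essentially the paper's Proposition \ref{prop1}: pass to a normalized logarithm, iterate $g(s)=\tfrac12 g(2s)-\tfrac12 h(s)$, let the boundary term $2^{-n}g(2^ns)$ vanish as $\mathfrak{Re}(2^ns)\to+\infty$, and read off $\log p_i(s)=-\sum_{k\geq0}2^{-k-1}\log q_i(2^ks)$, which pins the solution down. Where you genuinely diverge from the paper is in the verification of the functional equations: you compare local Euler factors directly, showing that both $p_i(2s)/p_i(s)^2$ and $q_i(s)$ reduce to $\prod_{\chi(p)=\pm1}\frac{1-p^{-s}}{1+p^{-s}}$ once the ramified primes are cancelled by the factors $\prod_{p\mid d}(1\pm p^{-s})^{\mp1}$, whereas the paper first establishes the global identity $p_1(s)\,p_2(2s)/p_2(s)=L(s,\chi)$ and then substitutes $p_1(s)p_2(s)=\zeta(s)\prod_{p\mid d}(1-p^{-s})$ twice to isolate each quotient. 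Your factorwise route is somewhat more transparent and has the side benefit of exhibiting the closed product form of $q_i$, which the paper only brings out later, in the proof of Lemma \ref{fitaq}; the paper's route trades the three-way case bookkeeping for manipulation of the known factorizations of $\zeta_K$ and $\zeta$. The one item you defer --- that any candidate solution is nonvanishing on $\Omega$ so that a logarithm tending to $0$ can be chosen --- is settled in one line exactly as in the paper: $p(s)=0$ would propagate through the equation to $p(2^k s)=0$ for all $k$, contradicting $p(s)\to1$ as $\mathfrak{Re}(s)\to+\infty$.
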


Moreover, we shall see that the logarithm of the solution of this functional equation can be written as a series
\begin{equation}\label{series}\log p_i(s)=-\sum_{n=0}^{+\infty}\frac{\log q_i(2^ns)}{2^{n+1}},\qquad i=1,2,\end{equation} and hence, we will have an alternative expression of $p_1$ and $p_2$.

In section 3 we will see that the series given by \eqref{series} are of very fast convergence. We shall prove

\begin{thm}\label{prop2} Let $s$ be complex number such that $\mathfrak{Re}(s)\geq1$. Then,
\[p_1(2s)=\exp\left\{-\sum_{k=1}^{n}\frac{1}{2^k}\log q_1(2^ks)\right\}+o\left(2^{-2^n}\right),\]and \[p_2(2s)=\exp\left\{-\sum_{k=1}^{n}\frac{1}{2^k}\log q_2(2^ks)\right\}+o\left(2^{-2^n}\right).\]
\end{thm}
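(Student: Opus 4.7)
The plan is to apply the series representation \eqref{series} at argument $2s$ and bound the resulting tail sharply. Re-indexing \eqref{series} gives
\[
\log p_i(2s) = -\sum_{k=1}^{\infty}\frac{\log q_i(2^ks)}{2^k},
\]
so the theorem reduces to showing that the tail
\[
T_n(s) := -\sum_{k=n+1}^{\infty}\frac{\log q_i(2^ks)}{2^k}
\]
satisfies $T_n(s)=o(2^{-2^n})$ as $n\to\infty$. Once this is known, the conclusion follows by writing
\[
p_i(2s) = \exp\Bigl\{-\sum_{k=1}^{n}\tfrac{\log q_i(2^ks)}{2^k}\Bigr\}\,\bigl(1+O(T_n(s))\bigr),
\]
since the first factor stays bounded (it converges to $p_i(2s)$, which is close to $1$ when $\mathfrak{Re}(s)\geq1$).

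The crux is a uniform estimate of the form $|\log q_i(\sigma)|\leq C\,2^{-\mathfrak{Re}(\sigma)}$ for $\mathfrak{Re}(\sigma)\geq 2$, with an absolute constant $C$. The cleanest way to see this is to first carry out the cancellations in the definitions \eqref{qs} by writing
\[
\zeta(s)L(s,\chi) = \prod_{p\mid d}(1-p^{-s})^{-1}\prod_{\chi(p)=1}(1-p^{-s})^{-2}\prod_{\chi(p)=-1}(1-p^{-2s})^{-1}
\]
and substituting; after simplification one obtains the compact formula
\[
q_i(s)=\prod_{\chi(p)=\varepsilon_i}\frac{1-p^{-s}}{1+p^{-s}},\qquad \varepsilon_1=+1,\ \varepsilon_2=-1,
\]
which one recognises as a direct verification of the functional equation of Theorem \ref{th}. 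Taking logarithms and using that $\log\frac{1-z}{1+z}=O(z)$ for small $|z|$, each prime contributes $O(p^{-\mathfrak{Re}(s)})$, and the sum is controlled by $\sum_p p^{-\mathfrak{Re}(s)}\leq 3\cdot 2^{-\mathfrak{Re}(s)}$ for $\mathfrak{Re}(s)\geq 2$, which gives the claimed bound.

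Applying the estimate with $\sigma=2^ks$, so that $\mathfrak{Re}(\sigma)\geq 2^k\geq 2$ for every $k\geq 1$, we get
\[
|T_n(s)| \leq C\sum_{k=n+1}^{\infty}\frac{2^{-2^k}}{2^k} \leq \frac{2C}{2^{n+1}}\,2^{-2^{n+1}},
\]
the last step because the ratio of consecutive terms is bounded by $\tfrac{1}{2}\,2^{-2^k}\leq\tfrac12$, so the series is dominated by twice its first term. Since $2^{-2^{n+1}}=(2^{-2^n})^2$, this is $o(2^{-2^n})$, which together with the paragraph above proves the theorem.

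The only non-routine step is the uniform decay estimate $|q_i(s)-1|=O(2^{-\mathfrak{Re}(s)})$; once this is in hand, the doubly-exponential rate of convergence is automatic, reflecting the fact that the doubling $s\mapsto 2s$ squares the relevant small parameter $2^{-\mathfrak{Re}(s)}$ at every step. The main bookkeeping obstacle is the cancellation needed to turn the somewhat unwieldy ratios in \eqref{qs} into an infinite product whose logarithm is visibly of order $2^{-\mathfrak{Re}(s)}$; everything else is a routine geometric-type tail estimate and the passage from $\log$ to $\exp$.
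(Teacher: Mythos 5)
Your proposal is correct and follows essentially the same route as the paper: the same cancellation $q_i(s)=\prod_{\chi(p)=\pm1}\frac{1-p^{-s}}{1+p^{-s}}$, the same key bound $|\log q_i(s)|=O(2^{-\mathfrak{Re}(s)})$ for $\mathfrak{Re}(s)\geq2$ (the paper's Lemma \ref{fitaq}, with constant $16/(2^\sigma-2)$), and the same doubly-exponential tail estimate applied to the re-indexed series of Corollary \ref{corr}. Your geometric domination of the tail by its first term is in fact slightly sharper than the paper's $o(2^{-2^n})$ bookkeeping, but it is not a different method.
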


As a consequence, we will have a way to evaluate $p_1$ and $p_2$ at even positive integers when $\Delta_K$ is positive. This will be done by calculating explicitly the general term of the series in this case.

\vspace{5mm}

\section{The functional equation of $p_1$ and $p_2$}

First we prove that the functional equation appearing in Theorem \ref{th} has a unique solution and that this solution can be written as an infinite series. The statement of the result is the following.

\begin{prop}\label{prop1} Let $\Omega=\{s\in\mathbb{C} | \mathfrak{Re}(s)>1\}$, and $q$ an holomorphic function defined in $\Omega$, with $q(s)\neq0$ for all $s\in\Omega$ and
$\lim_{\mathfrak{Re}(s)\rightarrow +\infty}q(s)=1$. Then, the functional
equation \[\frac{p(2s)}{p(s)^2}=q(s),\qquad \lim_{\mathfrak{Re}(s)\rightarrow
+\infty}p(s)=1\] has a unique solution $p(s)$. In addition, the
solution can be written as
\[p(s)=\exp\left\{-\sum_{n\geq0}\frac{\log q(2^ns)}{2^{n+1}}\right\},\] and this series is absolutely convergent for all $s$ in $\Omega$.
\end{prop}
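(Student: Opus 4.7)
The plan is to linearize the functional equation by taking logarithms and then to solve the resulting additive equation by iteration. Since $\Omega$ is simply connected and $q$ is non-vanishing on $\Omega$, there exists a single-valued holomorphic branch $Q(s):=\log q(s)$, uniquely determined by the normalization $Q(s)\to 0$ as $\mathfrak{Re}(s)\to+\infty$ (which is possible because $q\to 1$). A candidate solution $p$ must also be non-vanishing on $\Omega$: iterating the functional equation gives
\[p(2^n s)=p(s)^{2^n}\prod_{k=0}^{n-1}q(2^k s)^{2^{n-1-k}},\]
so $p(s_0)=0$ would force $p(2^n s_0)=0$ for every $n$, contradicting $p(2^n s_0)\to 1$. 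Thus $P:=\log p$ is a well-defined holomorphic function on $\Omega$ with $P(s)\to 0$ at infinity, and the functional equation becomes the additive linear equation $P(2s)-2P(s)=Q(s)$.

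For uniqueness and for deriving the formula, I would iterate. Rewriting the additive equation as $P(s)=\tfrac12 P(2s)-\tfrac12 Q(s)$ and applying it $n$ times yields
\[P(s)=\frac{1}{2^n}P(2^n s)-\sum_{k=0}^{n-1}\frac{Q(2^k s)}{2^{k+1}}.\]
Since $\mathfrak{Re}(2^n s)\to+\infty$, the boundary condition forces $P(2^n s)\to 0$, so combined with the prefactor $2^{-n}$ the first term vanishes as $n\to\infty$. This simultaneously proves uniqueness and produces the explicit formula $P(s)=-\sum_{k\geq 0}Q(2^k s)/2^{k+1}$.

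For existence I would take this series as the definition of $P$ and verify three things: absolute, locally uniform convergence on $\Omega$ (hence holomorphy by Weierstrass's theorem), the functional equation itself (a direct telescoping argument obtained by comparing the series at $s$ and at $2s$ and shifting the index), and the boundary condition (each term is small for large $\mathfrak{Re}(s)$, uniformly in $k$). Setting $p(s):=e^{P(s)}$ then produces the required solution.

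The main obstacle is the convergence of the series, since the hypothesis provides no explicit decay rate for $q(s)-1$. The key observation is that for any compact $K\subset\Omega$ one has $\mathfrak{Re}(2^k s)=2^k\mathfrak{Re}(s)\to+\infty$ uniformly in $s\in K$; interpreting the assumption $q(s)\to 1$ in its natural sense (uniformly in $\mathfrak{Im}(s)$), one obtains a uniform bound $|Q(2^k s)|\leq M$ for all $k\geq k_0$ and $s\in K$, which together with the geometric factor $2^{-k-1}$ yields a summable majorant. Once this uniform control is in place, the remaining verifications reduce to routine manipulations.
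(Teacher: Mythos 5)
Your proposal is correct and follows essentially the same route as the paper: take logarithms, iterate the resulting additive equation $P(2s)-2P(s)=Q(s)$ and use $\mathfrak{Re}(2^ns)\to+\infty$ to obtain both uniqueness and the series, then verify existence by telescoping and bounding $|\log q(2^ks)|$ uniformly so the factor $2^{-k-1}$ gives absolute convergence. The only (inessential) difference is that the paper first divides by $2s$, working with $f(s)=\log p(s)/s$, whereas you work directly with $\log p$; your remarks on branch normalization and locally uniform convergence are, if anything, slightly more careful than the paper's.
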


\begin{proof} Suppose that $p(s)$ satisfies the functional equation. Then, $p(s)\neq0$ for all $s\in\Omega$. This is because $p(s)=0$ implies $p(2s)=0$ and $p(2^ks)=0$ for $k=1,2,...$, which contradicts the hypothesis $\lim_{\mathfrak{Re}(s)\rightarrow
+\infty}p(s)=1$. Thus, we can define
\[f(s)=\frac{\log p(s)}{s},\qquad g(s)=\frac{\log q(s)}{2s},\] where $\log$ is the principal branch of the complex logarithm.
Taking logarithms to our functional equation and dividing by $2s$, we
have that
\[f(2s)=f(s)+g(s),\qquad \lim_{\mathfrak{Re}(s)\rightarrow +\infty}f(s)=0.\]
Writing this last equation for $s,2s,4s,8s,...,2^Ns$, and adding
them, we obtain that
\[f(2^{N+1}s)=f(s)+\sum_{n=0}^{N}g(2^ns).\] Since
$\mathfrak{Re}(s)>1$, then $\mathfrak{Re}(2^{N+1}s)\rightarrow+\infty$ when
$N\rightarrow\infty$, so
\[f(s)+\sum_{n=0}^{\infty}g(2^ns)=\lim_{N\rightarrow\infty}f(2^{N+1}s)=0,\] and \[\log p(s)=-\sum_{n\geq0}\frac{\log q(2^ns)}{2^{n+1}}.\]

Since \[\lim_{\mathfrak{Re}(s)\rightarrow +\infty}\log q(s)=0,\] the
sequence $\{\log q(2^ns)\}_{n\in\mathbb{N}}$ converges (it tends
to 0), and in particular it is bounded. Hence, there exists $M>0$
such that $|\log q(2^ns)|<M$, and then
\[\sum_{n\geq0}\left|\frac{\log q(2^ns)}{2^{n+1}}\right|\leq
\sum_{n\geq0}\frac{M}{2^{n+1}}=M,\] so the series is absolutely
convergent for all $s\in\Omega$.

Let us see that this function satisfies the functional
equation. We have that
\begin{eqnarray*}\log p(2s)-2\log p(s)&=&-\sum_{n\geq0}\frac{\log q(2^{n+1}s)}{2^{n+1}}+2\sum_{n\geq0}\frac{\log q(2^ns)}{2^{n+1}}\\
&=&-\sum_{n\geq1}\frac{\log q(2^ns)}{2^n}+\sum_{n\geq0}\frac{\log q(2^ns)}{2^n}\\
&=&\log q(s),\end{eqnarray*} and then,
\[\frac{p(2s)}{p(s)^2}=q(s).\] We now have to see that
$\lim_{\mathfrak{Re}(s)\rightarrow +\infty}p(s)=1$, or equivalently,
\[\lim_{\mathfrak{Re}(s)\rightarrow +\infty}\log p(s)=0.\] For it, fix $\varepsilon>0$.
Since $\lim_{\mathfrak{Re}(s)\rightarrow +\infty}q(s)=1$, then
$\lim_{\mathfrak{Re}(s)\rightarrow +\infty}\log q(s)=0$, and exists
$\sigma>0$ such that \[|\log q(s)|<\epsilon\ \ \ \mbox{for all}\ \ \ s\ \ \textrm{
with }\ \ \mathfrak{Re}(s)\geq\sigma.\]
Hence, if $\mathfrak{Re}(s)\geq \sigma$, then
\[|\log p(s)|\leq \sum_{n\geq0}\left|\frac{\log q(2^ns)}{2^{n+1}}\right|\leq \sum_{n\geq0}\frac{\varepsilon}{2^{n+1}}=\varepsilon,\]
and $\lim_{\mathfrak{Re}(s)\rightarrow +\infty}\log p(s)=0$, as claimed.

Note that, in fact, the branch of the logarithm is irrelevant, since when
we take exponentials, we will have
\[p(s)=\exp\left\{-\sum_{n\geq0}\frac{\log
q(2^ns)}{2^{n+1}}\right\},\] independently of the chosen branch.
\end{proof}

We can now give the:

\vspace{3mm}

\noindent \emph{Proof of Theorem \ref{th}.} On the one hand, it is clear that
$\lim_{\mathfrak{Re}(s)\rightarrow +\infty}p_i(s)=1$, $i=1,2$.

On the other hand, we have that
\begin{eqnarray*}p_1(s)\frac{p_2(2s)}{p_2(s)}&=&\prod_{\chi(p)=1}(1-p^{-s})^{-1}\frac{\prod_{\chi(p)=-1}(1-p^{-2s})^{-1}}{\prod_{\chi(p)=-1}(1-p^{-s})^{-1}}\\
&=&
\prod_{\chi(p)=1}(1-p^{-s})^{-1}\prod_{\chi(p)=-1}\left(\frac{1-p^{-2s}}{1-p^{-s}}\right)^{-1}\\
&=&\prod_{\chi(p)=1}(1-p^{-s})^{-1}\prod_{\chi(p)=-1}(1+p^{-s})^{-1}\\
&=&L(s,\chi),
\end{eqnarray*}
and since
\[p_1(s)=\frac{1}{p_2(s)}\zeta(s)\prod_{p|d}(1-p^{-s}),\]
then
\[\frac{p_2(2s)}{p_2(s)^2}=\frac{L(s,\chi)}{\zeta(s)}\prod_{p|d}(1-p^{-s})^{-1}.\]

Using now that
\[p_2(s)=\frac{1}{p_1(s)}\zeta(s)\prod_{p|d}(1-p^{-s}),\] we obtain
\[\frac{p_1(2s)}{p_1(s)^2}=\frac{p_2(s)^2}{p_2(2s)}\cdot\frac{\zeta(2s)\prod_{p|d}(1-p^{-2s})}{\zeta(s)^2\prod_{p|d}(1-p^{-s})^2}=\frac{\zeta(2s)}{\zeta(s)L(s,\chi)}\prod_{p|d}(1+p^{-s}).\]

The fact that these functional equations have an unique solution
follows from Proposition \ref{prop1}.
\qed

As a consequence of Proposition \ref{prop1} and Theorem \ref{th}, we obtain the following expression for $p_1(s)$ and $p_2(s)$.

\begin{cor} \label{corr} Let $p_1$ and $p_2$ be given by \eqref{p}. Then,
\[p_i(s)=\exp\left\{-\frac{1}{2}\sum_{n\geq0}\frac{\log q_i(2^ns)}{2^n}\right\}\qquad \mbox{for}\qquad i=1,2,\] where
\[q_1(s)=\frac{\zeta(2s)}{\zeta(s)L(s,\chi)}\prod_{p|d}(1+p^{-s}),\ \ \mbox{and}\ \ q_2(s)=\frac{L(s,\chi)}{\zeta(s)}\prod_{p|d}(1-p^{-s})^{-1}.\]
\end{cor}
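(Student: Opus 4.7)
The corollary is an immediate consequence of Theorem~\ref{th} together with Proposition~\ref{prop1}, which has already been proved. My plan is therefore very short: verify that $q_1$ and $q_2$ from \eqref{qs} satisfy the hypotheses of Proposition~\ref{prop1} on $\Omega=\{s\in\mathbb{C} : \mathfrak{Re}(s)>1\}$, and then invoke its explicit formula, rewriting $2^{-(n+1)}=\tfrac{1}{2}\cdot 2^{-n}$ to put the series in the form claimed.

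The verification of the hypotheses is the only substantive point. The ingredients of $q_i$ are $\zeta(s)$, $\zeta(2s)$, $L(s,\chi)$, and the finite factors $\prod_{p\mid d}(1\pm p^{-s})^{\pm 1}$. On $\Omega$ the Euler products for $\zeta(s)$ and $L(s,\chi)$ converge absolutely, so both are holomorphic and non-vanishing there; since $\mathfrak{Re}(2s)>2>1$ for $s\in\Omega$, the same holds for $\zeta(2s)$. The finite products over $p\mid d$ are entire and have no zeros for $\mathfrak{Re}(s)>0$. Hence $q_1,q_2$ are holomorphic and non-vanishing on $\Omega$. Moreover, every factor tends to $1$ as $\mathfrak{Re}(s)\to+\infty$ because each term $p^{-s}\to 0$, giving $\lim_{\mathfrak{Re}(s)\to+\infty}q_i(s)=1$.

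With the hypotheses in hand, Theorem~\ref{th} asserts that $p_i$ solves the functional equation $p_i(2s)/p_i(s)^2=q_i(s)$ with the correct limit at infinity, and Proposition~\ref{prop1} identifies this unique solution with
\[p_i(s)=\exp\left\{-\sum_{n\geq 0}\frac{\log q_i(2^n s)}{2^{n+1}}\right\}=\exp\left\{-\frac{1}{2}\sum_{n\geq 0}\frac{\log q_i(2^n s)}{2^n}\right\},\]
which is exactly the statement. I do not foresee any real obstacle; the only point requiring any care is confirming that $q_i$ is holomorphic and non-vanishing on $\Omega$, and this reduces to the classical fact that $\zeta$ and $L(\cdot,\chi)$ are given by absolutely convergent, non-vanishing Euler products in that half-plane.
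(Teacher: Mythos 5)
Your proposal is correct and follows exactly the route the paper intends: the corollary is stated there as an immediate consequence of Theorem~\ref{th} and Proposition~\ref{prop1}, which is precisely your argument, with the added (and welcome) care of checking that $q_1,q_2$ are holomorphic, non-vanishing, and tend to $1$ on $\Omega$ so that Proposition~\ref{prop1} applies. Nothing further is needed.
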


These expressions will be used in the next section.

\vspace{5mm}

\section{Evaluating $p_1$ and $p_2$}

In this section we will calculate the order of convergence of the series given by Corollary \ref{corr}. We will see that this convergence is of order $2^{-2^n}$, i.e.,
\[p_i(2s)=\exp\left\{-\sum_{k=1}^{n}\frac{1}{2^k}\log q_i(2^ks)\right\}+o\left(2^{-2^n}\right),\]
and therefore this will be a better way to evaluate the functions $p_1$ and $p_2$ than the one given by the infinite products
\[p_1(s)=\prod_{\chi(p)=1}(1-p^{-s})^{-1}\qquad
\textrm{and}\qquad p_2(s)=\prod_{\chi(p)=-1}(1-p^{-s})^{-1}.\]

Moreover, we will provide the general term
of these series at even positive integers in the case $\Delta_K>0$. For it, we will use generalized Bernoulli numbers.

\begin{rem} Recall that \[f(n)=o\left(g(n)\right)\textrm{ means that }\lim_{n\rightarrow+\infty} \frac{f(n)}{g(n)}=0,\] and \[a(n)=b(n)+o\left(g(n)\right)\textrm{ means that }a(n)-b(n)=o\left(g(n)\right).\]\end{rem}

In order to prove Theorem \ref{prop2}, we will need two lemmata.

\begin{lem}\label{zeta} Let $\sigma$ be a real number, $\sigma>1$. Then, \[\frac{2^\sigma-1}{2^\sigma-2}<\zeta(\sigma)<\frac{2^\sigma}{2^\sigma-2}.\]
\end{lem}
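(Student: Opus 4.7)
The goal is to bound the tail $\sum_{n\ge 2}n^{-\sigma}$ from above by $2/(2^\sigma-2)$ and from below by $1/(2^\sigma-2)$; once this is done, adding $1$ produces the two displayed inequalities, since $1+2/(2^\sigma-2)=2^\sigma/(2^\sigma-2)$ and $1+1/(2^\sigma-2)=(2^\sigma-1)/(2^\sigma-2)$. The plan is to use a dyadic block decomposition in the spirit of Cauchy's condensation test, exploiting only the strict monotonicity of $n\mapsto n^{-\sigma}$, with a slight asymmetry in the choice of blocks for the two sides.

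For the upper bound I group $\{n\ge 2\}=\bigsqcup_{k\ge 1}\{2^k,\dots,2^{k+1}-1\}$. Each block contains $2^k$ terms, all of size $\ge 2^k$, so the block sum is at most $2^k\cdot 2^{-k\sigma}=2^{k(1-\sigma)}$. Summing the resulting geometric series in $k$ yields
\[\sum_{n\ge 2}n^{-\sigma}<\sum_{k\ge 1}2^{k(1-\sigma)}=\frac{2^{1-\sigma}}{1-2^{1-\sigma}}=\frac{2}{2^\sigma-2}.\]

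For the lower bound I shift the partition by one and write $\{n\ge 2\}=\bigsqcup_{k\ge 1}\{2^{k-1}+1,\dots,2^k\}$. Each block now contains $2^{k-1}$ terms, all of size $\le 2^k$, so the block sum is at least $2^{k-1}\cdot 2^{-k\sigma}=2^{k-1-k\sigma}$; the corresponding geometric series sums to $\tfrac{1}{2}\cdot 2^{1-\sigma}/(1-2^{1-\sigma})=1/(2^\sigma-2)$, giving $\sum_{n\ge 2}n^{-\sigma}>1/(2^\sigma-2)$.

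The only small point requiring attention is the strictness of both inequalities. This causes no real trouble: for every $k\ge 2$ each block contains at least two distinct integers and $n\mapsto n^{-\sigma}$ is strictly decreasing, so the block inequality is already strict for those $k$, and strictness is inherited by the full sum. I foresee no other obstacle — the entire proof is a double application of the condensation idea with an appropriate choice of dyadic partition.
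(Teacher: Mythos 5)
Your proof is correct, and it is the same Cauchy--condensation idea the paper uses: dyadic blocks, $2^k$ terms per block, geometric series. Your upper bound is literally the paper's (the paper keeps $n=1$ inside the block $k=0$, you peel it off and add $1$ back; same computation). The one place you genuinely diverge is the lower bound: you shift the partition to the blocks $\{2^{k-1}+1,\dots,2^k\}$ and keep the term $n=1$ exact, which yields $\zeta(\sigma)>1+\sum_{k\ge1}2^{k-1}2^{-k\sigma}=1+\frac{1}{2^\sigma-2}=\frac{2^\sigma-1}{2^\sigma-2}$. The paper instead disposes of this half in one sentence, ``if $n\in A_k$ then $n^{-\sigma}\leq 2^{-(k+1)\sigma}$'' (the inequality sign is evidently meant reversed), applied to the \emph{unshifted} blocks $A_k=[2^k,2^{k+1})$; taken literally, that estimate sums to $\sum_{k\ge0}2^k2^{-(k+1)\sigma}=\frac{1}{2^\sigma-2}$, which falls short of the claimed bound $\frac{2^\sigma-1}{2^\sigma-2}$, and even keeping the $n=1$ term exact one only gets $1+\frac{2^{1-\sigma}}{2^\sigma-2}$. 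So your shifted partition is not just a stylistic variant: it is the version of the condensation argument that actually delivers the stated constant, and your closing remark on strictness (at least one block with two or more terms and strict monotonicity of $n^{-\sigma}$) supplies a detail the paper leaves implicit. No gaps.
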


\begin{proof} We make a partition of $\mathbb{N}$ in the sets $A_k=\{n\in\mathbb{N}: 2^k\leq n<2^{k+1}\}$, $k\geq1$. It is clear that $|A_k|=2^k$, and
that if $n\in A_k$, then $n^{-\sigma}\leq 2^{-k\sigma}$. Hence,
\begin{eqnarray*}\zeta(\sigma)&=&\sum_{n\in\mathbb{N}}n^{-\sigma}=\sum_{k\geq0}\sum_{n\in A_k}n^{-\sigma}< \sum_{k\geq0}\sum_{n\in
A_k}2^{-k\sigma}\\&=&\sum_{k\geq0}|A_k|{\cdot}2^{-k\sigma}=\sum_{k\geq0}2^k{\cdot}2^{-k\sigma}=\sum_{k\geq0}(2^{1-\sigma})^k\\&=&\frac{1}{1-2^{1-\sigma}}=\frac{2^\sigma}{2^\sigma-2}.\end{eqnarray*}

Using that if $n\in A_k$ then $n^{-\sigma}\leq 2^{-(k+1)\sigma}$, we obtain the other side of the inequality.
\end{proof}

\begin{lem}\label{fitaq} Let $s=\sigma+it$, with $\sigma\geq2$, and let $q_1$ and $q_2$ be given by \eqref{qs}. Then,
\[|\log q_i(s)|\leq \frac{16}{2^\sigma-2}\qquad\mbox{for}\qquad i=1,2,\] where $\log$ denotes the principal branch of the complex logarithm.
\end{lem}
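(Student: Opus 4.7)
The plan is to first reduce the formulas \eqref{qs} for $q_1$ and $q_2$ to clean Euler products involving only a single class of primes, and then bound the resulting logarithm term by term with the aid of Lemma \ref{zeta}.

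Using the Euler products $\zeta(s) = \prod_p(1-p^{-s})^{-1}$, $L(s,\chi) = \prod_p(1-\chi(p)p^{-s})^{-1}$ (with the convention $\chi(p) = 0$ for $p\mid d$), and $\zeta(2s) = \prod_p(1-p^{-2s})^{-1}$, I would simplify the Euler factors of $q_i(s)$ case by case according to the value $\chi(p) \in \{0,1,-1\}$. Using $1-p^{-2s} = (1-p^{-s})(1+p^{-s})$, the ramified factors ($p\mid d$) cancel exactly against the corrections $\prod_{p\mid d}(1\pm p^{-s})^{\pm 1}$ appearing in \eqref{qs}, and one of the two unramified classes contributes trivially. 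The outcome is the clean formula
\[q_1(s) = \prod_{\chi(p)=1}\frac{1-p^{-s}}{1+p^{-s}}, \qquad q_2(s) = \prod_{\chi(p)=-1}\frac{1-p^{-s}}{1+p^{-s}}.\]

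Second, I would take logarithms by means of the expansion $\log\frac{1-z}{1+z} = -2\sum_{k\geq 0} \frac{z^{2k+1}}{2k+1}$, valid for $|z|<1$. Since $\sigma\geq 2$ forces $|p^{-s}|\leq 1/4$, this yields
\[\log q_i(s) = -2 \sum_{p\in S_i}\sum_{k\geq 0}\frac{p^{-(2k+1)s}}{2k+1},\]
where $S_i$ is either $\{p:\chi(p)=1\}$ or $\{p:\chi(p)=-1\}$. One checks that the absolutely convergent double series agrees with the principal branch of $\log q_i$: both functions are analytic in the half-plane, their exponentials coincide, and both tend to $0$ as $\mathfrak{Re}(s)\to+\infty$.

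Third, I would estimate in absolute value. Using $\frac{1}{2k+1}\leq 1$, the inner sum is majorised by the geometric series $p^{-\sigma}/(1-p^{-2\sigma})$. For $p\geq 2$ and $\sigma\geq 2$ one has $p^{-2\sigma}\leq 1/16$, and hence $(1-p^{-2\sigma})^{-1}\leq 16/15 < 2$. Extending the sum from $S_i$ to all primes and invoking $\sum_p p^{-\sigma} \leq \zeta(\sigma) - 1$, Lemma \ref{zeta} produces
\[|\log q_i(s)| \leq 4\sum_p p^{-\sigma} \leq 4\bigl(\zeta(\sigma)-1\bigr) < 4\left(\frac{2^\sigma}{2^\sigma-2}-1\right) = \frac{8}{2^\sigma-2} \leq \frac{16}{2^\sigma-2},\]
which is the advertised bound. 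The only delicate point is the branch-of-logarithm identification in the second step; the algebraic cancellations in the first step and the term-by-term estimate in the third are routine once the single-product formulas for $q_1$ and $q_2$ are in hand. The main conceptual observation, therefore, is that the apparently elaborate expressions \eqref{qs} collapse to a product over just one class of primes.
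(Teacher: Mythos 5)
Your proposal is correct and follows essentially the same route as the paper: both collapse $q_i$ to the single product $\prod_{\chi(p)=\pm1}\frac{1-p^{-s}}{1+p^{-s}}$, bound the logarithm of each local factor by a constant multiple of $p^{-\sigma}$, and conclude by comparing $\sum_p p^{-\sigma}$ with $\sum_{n\geq2}n^{-\sigma}$ via Lemma \ref{zeta}. The only differences are cosmetic: you expand $\log\frac{1-z}{1+z}$ in odd powers where the paper uses the inequality $|\log(1+z)|\leq-\log(1-|z|)$ together with elementary estimates, and your constant comes out slightly sharper ($8/(2^\sigma-2)$ instead of $16/(2^\sigma-2)$).
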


\begin{proof} First we claim that
\begin{equation}\label{log}|\log(1+z)|\leq -\log(1-|z|),\end{equation}for each $|z|<1$.
To see it, it suffices to compare its power series:
\[|\log(1+z)|=\left|z-\frac{z^2}{2}+\cdots\right|\leq|z|+\frac{|z|^2}{2}+\cdots=-\log(1-|z|).\]

Now, using \eqref{log} and that
\[\left|\frac{1-p^{-s}}{1+p^{-s}}-1\right|=\frac{2p^{-\sigma}}{1-p^{-\sigma}},\] we get
\begin{eqnarray*} |\log q_i(s)| &=& \left|\log\prod_{\chi(p)=\pm1}\left(\frac{1-p^{-s}}{1+p^{-s}}\right)\right|\\
&\leq&  \sum_{\chi(p)=\pm1}\left|\log\left(\frac{1-p^{-s}}{1+p^{-s}}\right)\right|\\
&\leq&  \sum_{\chi(p)=\pm1}-\log\left(1-\frac{2p^{-\sigma}}{1-p^{-\sigma}}\right)\\
&=&\sum_{\chi(p)=\pm1} \log\left(\frac{1-p^{-\sigma}}{1-3p^{-\sigma}}\right).
\end{eqnarray*}
Moreover, since $\log(1+x)\leq x$ for each $x>0$, then
\[|\log q_i(s)| \leq \sum_{\chi(p)=\pm1}\left(\frac{1-p^{-\sigma}}{1-3p^{-\sigma}}-1\right)=\sum_{\chi(p)=\pm1}\frac{2}{p^{\sigma}-3}.\]
But since $\sigma\geq2$ then \[p^\sigma-3\geq \frac14 p^\sigma\] for each $p\geq2$, and therefore
\[|\log q_i(s)|\leq 8\sum_{\chi(p)=\pm1}p^{-\sigma},\qquad i=1,2.\]
Finally, by Lemma \ref{zeta} we have that
\[|\log q_i(s)|\leq 8\sum_{n\geq2}n^{-\sigma}\leq \frac{16}{2^\sigma-2},\qquad i=1,2,\]
and we are done.
\end{proof}


By using the last Lemma, we will be able to bound the general term of the series which give $p_1$ and $p_2$, and from this, we will deduce Theorem \ref{prop2}.

\vspace{3mm}

\noindent \emph{Proof of Theorem \ref{prop2}.} Let $x_n$ and $y_n$ be the general term of the series which give $\log p_1(2s)$ and $\log p_2(2s)$, i.e. \[x_n=\frac{1}{2^{n+1}}\log q_1(2^ns),\qquad y_n=\frac{1}{2^{n+1}}\log q_2(2^ns).\]

By Lemma \ref{fitaq}, we have that
\[|x_n|=\frac{1}{2^{n+1}}\left|\log q_1(2^ns) \right|\leq
\frac{1}{2^{n+1}}\frac{16}{2^{2^n\sigma}-2}=o\left(2^{-2^n}\right).\]
Analogously, \[y_n=o\left(2^{-2^n}\right).\]

Thus,
\begin{eqnarray*}
p_i(2s)&=&\exp\left\{-\sum_{k=1}^{n}x_k-\sum_{k=n+1}^{\infty}o\left(2^{-2^k}\right)\right\}\\
&=&\exp\left\{-\sum_{k=1}^{n}x_k-o\left(\sum_{k=n+1}^{\infty}2^{-2^k}\right)\right\}\\
&=&\exp\left\{-\sum_{k=1}^{n}x_k+o\left(2^{-2^n}\right)\right\}\\
&=&\exp\left\{-\sum_{k=1}^{n}x_k\right\}\exp\left\{o\left(2^{-2^n}\right)\right\}\\
&=&\exp\left\{-\sum_{k=1}^{n}x_k\right\}\left(1+o\left(2^{-2^n}\right)\right)\\
&=&\exp\left\{-\sum_{k=1}^{n}x_k\right\}+o\left(2^{-2^n}\right),
\end{eqnarray*}
and we are done.
\qed


Let us see now how can we evaluate the general term $2^{-n-1}\log q_i(2^ns)$ of the series at even positive integers when $\Delta_K>0$.

Recall that given a Dirichlet character $\chi$ mod $d$, the
generalized Bernoulli numbers \cite{1} are given by
\[\sum_{a=1}^d\chi(a)\frac{te^{at}}{e^{dt}-1}=\sum_{n=0}^{\infty}B_{n,\chi}\frac{t^n}{n!}.\]

Moreover, \[L(1-n,\chi)=-\frac{B_{n,\chi}}{n},\] and
using the functional equation of the $L$-function one can evaluate
$L$ at some positive integers, as given in the following Theorem.

\begin{thm}[\cite{1}]\label{L} Let $\chi$ be a nontrivial primitive character modulo $d$, and let $a$ be 0 if $\chi$ is even and 1 if $\chi$ is odd. Then,
if $n\equiv a\ (mod\ 2)$,
\[L(n,\chi)=(-1)^{1+\frac{n-a}{2}}\frac{g(\chi)}{2i^a}\left(\frac{2\pi}{m}\right)^{n}\frac{B_{n,\overline{\chi}}}{n!},\]
where $g(\chi)$ is the Gauss sum of the character.
\end{thm}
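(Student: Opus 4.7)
The plan is to combine the identity $L(1-n,\chi)=-B_{n,\chi}/n$ stated just before the theorem with the functional equation of the Dirichlet $L$-function. I will take the functional equation itself as given, since reproving it would be a long detour and the paper clearly treats it as background: for $\chi$ a primitive character modulo $d$ with $\chi(-1)=(-1)^a$, the completed function
\[\Lambda(s,\chi)=\Bigl(\frac{d}{\pi}\Bigr)^{(s+a)/2}\Gamma\Bigl(\frac{s+a}{2}\Bigr)L(s,\chi)\]
satisfies $\Lambda(s,\chi)=\bigl(g(\chi)/(i^a\sqrt d)\bigr)\,\Lambda(1-s,\overline{\chi})$. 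Rearranging yields an explicit formula for $L(s,\chi)$ in terms of $L(1-s,\overline{\chi})$, a ratio of $\Gamma$-factors, a power of $d/\pi$, and the root number.

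Next I would specialize to $s=n$ with $n\equiv a\pmod 2$. Writing $n=a+2m$ with $m=(n-a)/2$, the argument $(s+a)/2=m+a$ is a positive integer and $\Gamma((s+a)/2)=(m+a-1)!$ is a plain factorial, while the argument $(1-s+a)/2=1/2-m$ is a half-integer. I would handle the latter using the reflection formula $\Gamma(z)\Gamma(1-z)=\pi/\sin(\pi z)$ together with $\sin(\pi(1/2-m))=\cos(\pi m)=(-1)^m$, obtaining
\[\Gamma\Bigl(\tfrac12-m\Bigr)=\frac{(-1)^m\pi}{\Gamma(m+1/2)},\]
and then use the standard evaluation $\Gamma(m+1/2)=(2m)!\sqrt\pi/(4^m m!)$. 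The key combinatorial identity $(2m)!=n!$ (when $n=2m+a$, combined with the $(m+a-1)!$ factor) will collapse all factorials into a single $n!$ in the denominator; the powers of $4$ and $\pi$ will combine with the surviving $(d/\pi)^{(1-2n)/2}\sqrt d/\sqrt d$ from the root number to form the clean $(2\pi/d)^n$ of the statement.

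Finally, substituting $L(1-n,\overline{\chi})=-B_{n,\overline{\chi}}/n$ and collecting constants gives the claimed identity; the sign $(-1)^{1+(n-a)/2}$ records the minus sign from the Bernoulli identity together with the $(-1)^m$ produced by the reflection formula, and the denominator $2i^a$ emerges from the $\sqrt\pi$ of $\Gamma(m+1/2)$ combined with the root-number factor $i^{-a}$. The main obstacle is purely bookkeeping: one must verify that the powers of $\pi$, $2$, and $d$, together with the roots of unity coming from $i^a$, the $\sin$, and the Gauss sum, all conspire into the stated compact form, and check this uniformly in the two cases $a=0$ (even $\chi$, even $n$) and $a=1$ (odd $\chi$, odd $n$). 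No genuine analytic difficulty arises beyond the functional equation itself.
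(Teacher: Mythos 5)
Your proposal is correct and follows exactly the route the paper itself indicates: Theorem \ref{L} is quoted from \cite{1} without proof, the paper only remarking that it follows from $L(1-n,\chi)=-B_{n,\chi}/n$ together with the functional equation of $L(s,\chi)$, which is precisely the derivation you carry out. The $\Gamma$-factor bookkeeping does close in both parities (for $a=1$ the factor $(2m)!\,(2m+1)=n!$ absorbs the $1/n$ from the Bernoulli identity, giving the same $2^n/(2\,n!)$ as for $a=0$), and note that the $\left(2\pi/m\right)^{n}$ in the statement is simply a notational slip for $\left(2\pi/d\right)^{n}$.
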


Let now be $d=\Delta_K>0$. Then, $\chi$ is an even quadratic character $mod\ d$. Therefore, for each $n\in\mathbb{N}$ even, one has
\begin{equation}\label{Ln}L(n,\chi)=(-1)^{1+\frac{n}{2}}\frac{\sqrt d}{2}\left(\frac{2\pi}{d}\right)^{n}\frac{B_{n,{\chi}}}{n!},\end{equation}
and
\begin{equation}\label{zn}\zeta(n)=(-1)^{1+\frac{n}{2}}\frac{(2\pi)^n}{2}\frac{B_{n}}{n!}.\end{equation}
From these equalities, we deduce the following.

\begin{prop}\label{calc} Assume that $d=\Delta_K>0$. Then, for each even natural number $n\geq2$, we have
\begin{equation}\label{q1}q_1(n)=\frac{2d^n}{{2n\choose n}\sqrt d}\frac{B_{2n}}{B_{n,\chi}B_{n}}\prod_{p|d}(1+p^{-n}),\end{equation}
\begin{equation}\label{q2}q_2(n)=\frac{\sqrt d}{d^n}\frac{B_{n,{\chi}}}{B_n}\prod_{p|d}(1-p^{-n})^{-1}.\end{equation}
\end{prop}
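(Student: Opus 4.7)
The proof is a direct substitution: plug the closed-form expressions \eqref{Ln} and \eqref{zn} into the definitions \eqref{qs} of $q_1$ and $q_2$, and simplify. Since $\Delta_K>0$, the character $\chi$ is even, so both identities apply at every even positive integer $n$.

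I would handle $q_2(n)$ first, since the computation is almost immediate. In the ratio $L(n,\chi)/\zeta(n)$, the formulas \eqref{Ln} and \eqref{zn} share the common sign factor $(-1)^{1+n/2}$, the factor $1/2$, the power $(2\pi)^n$, and the factorial $n!$; all of these cancel in the quotient, leaving only
\[\frac{L(n,\chi)}{\zeta(n)}=\frac{\sqrt d}{d^n}\cdot\frac{B_{n,\chi}}{B_n}.\]
Multiplying by $\prod_{p\mid d}(1-p^{-n})^{-1}$ yields \eqref{q2}.

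For $q_1(n)$, I would apply \eqref{zn} with argument $2n$ to obtain $\zeta(2n)=(-1)^{1+n}\,(2\pi)^{2n}B_{2n}/[2(2n)!]$, and multiply \eqref{Ln} by \eqref{zn} to compute the denominator $\zeta(n)L(n,\chi)$, which contributes a sign $(-1)^{2+n}$, a constant $1/4$, the power $(2\pi)^{2n}/d^n$, a factor $\sqrt d$, and $(n!)^{-2}$. In the quotient, the $(2\pi)^{2n}$ factors cancel; the factorials collapse as $(n!)^2/(2n)!=1/\binom{2n}{n}$; the powers of $d$ collect into $d^n/\sqrt d$; and the numerical constants give the factor $2$. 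Multiplying by $\prod_{p\mid d}(1+p^{-n})$ then produces \eqref{q1}.

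The only delicate point, and the main place where care is needed, is the bookkeeping of signs. The global factor $(-1)^{1+n}/(-1)^{2+n}=-1$ coming from the quotient must be reconciled with the known sign pattern of the Bernoulli objects involved: for $n$ even, both $B_n$ and $B_{n,\chi}$ share the sign $(-1)^{1+n/2}$ (so that $\zeta(n)$ and $L(n,\chi)$ are positive), while $B_{2n}$ has sign $(-1)^{1+n}$. One should verify that these signs combine correctly so that $q_1(n)$ comes out positive, as it must, since $\zeta(2n)$, $\zeta(n)$, $L(n,\chi)$ and the Euler factors are all positive real numbers for $n\ge 2$. Apart from this sign check, the argument is purely algebraic manipulation and requires no further input.
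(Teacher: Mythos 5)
Your method is exactly the paper's: Proposition \ref{calc} is proved there by precisely the substitution of \eqref{Ln} and \eqref{zn} into \eqref{qs} that you describe (the paper simply calls it immediate), and your treatment of $q_2$ is complete and correct.

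The trouble is the step you defer. You correctly isolate a surviving global factor $(-1)^{1+n}/(-1)^{2+n}=-1$ and correctly record the sign pattern of the Bernoulli quantities ($B_n$ and $B_{n,\chi}$ of sign $(-1)^{1+n/2}$, $B_{2n}$ of sign $(-1)^{1+n}$), but when these are actually combined the conclusion is not that the printed right-hand side of \eqref{q1} is positive; it is that the substitution yields
\[q_1(n)=-\frac{2d^n}{{2n\choose n}\sqrt d}\,\frac{B_{2n}}{B_{n,\chi}B_n}\prod_{p|d}(1+p^{-n}),\]
which is positive precisely because $B_{2n}<0$ while $B_nB_{n,\chi}>0$ for even $n$. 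The minus sign does not cancel against anything, so the formula \eqref{q1} as printed is the negative of the true value; the positivity of $q_1(n)$ that you invoke is consistent with the corrected formula, not with \eqref{q1} as stated. Concretely, for $d=5$, $n=2$ one has $B_2=1/6$, $B_4=-1/30$, $B_{2,\chi}=4/5$, so the right-hand side of \eqref{q1} equals $\frac{50}{6\sqrt5}\cdot\left(-\frac14\right)\cdot\frac{26}{25}\approx-0.969$, whereas $q_1(2)=\frac{\zeta(4)}{\zeta(2)L(2,\chi)}\left(1+5^{-2}\right)\approx+0.969$. So your computation, carried to the end, proves \eqref{q1} with $-B_{2n}$ (equivalently $|B_{2n}|$) in place of $B_{2n}$; you should state this explicitly as a sign correction to the proposition (one the paper's one-line proof also passes over), rather than assert that the signs ``combine correctly'' to give \eqref{q1} as written. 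The analogous check for \eqref{q2} shows no such issue, since there the sign factors cancel identically, as you observed.
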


\begin{proof} It follows immediately from \eqref{Ln}, \eqref{zn}, and the definition of $q_1$ and $q_2$ \eqref{qs}.
\end{proof}

Hence, by using Proposition \ref{calc} and Theorem \ref{prop2} we obtain series of very fast convergence to evaluate $p_1$ and $p_2$ at
even positive integers.

To see an example, let $\chi$ be the primitive
character modulo 5, and let us evaluate $p_1(2)$.
One the one hand, Taking the first 10 terms of the infinite
product one obtains 2 correct digits. On the other hand, taking also the first 10 terms in our series one obtains 619 correct digits. The following table shows the aproximate error when taking $n$ terms of
our series.

\vspace{3mm}

\begin{center}
\begin{tabular}{| c | c |}
    \hline
    \textbf{N}   & $p_1(2)-\exp\left\{-\sum_{k=1}^{N}\frac{1}{2^k}\log q_1(2^k)\right\}$    \\[3pt]
    \hline
    1    &   $10^{-2}$       \\
    2    &   $10^{-3}$       \\
    3    &   $10^{-6}$       \\
    4    &   $\ 10^{-11}$       \\
    5    &   $\ 10^{-21}$       \\
    6    &   $\ 10^{-41}$       \\
    7    &   $\ 10^{-79}$       \\
    8    &   $\ \ 10^{-157}$       \\
    9    &   $\ \ 10^{-311}$       \\
    10    &   $\ \ 10^{-620}$       \\
    11    &   $\ \ \ 10^{-1237}$       \\
    12    &   $\ \ \ 10^{-2470}$       \\
    \hline
\end{tabular}
\end{center}

\vspace{4mm}

\section*{Acknowledgements}

The author thanks Joan-C. Lario for all his comments and suggestions.

\vspace{4mm}


\begin{thebibliography}{10}


\bibitem{1} D. B. Zagier, \emph{Zetafunktionen und quadratische K\"{o}rper}, Springer-Verlag, 1981.

\bibitem{2} H. Davenport, \emph{Multiplicative number theory}, Springer-Verlag, 1980.

\end{thebibliography}
\end{document}